\newtheorem{theorem}{Theorem}[section]
\newtheorem{lemma}[theorem]{Lemma}
\newtheorem{corollary}[theorem]{Corollary}
\newtheorem{proposition}[theorem]{Proposition}
\newenvironment{proof}{{\par\addvspace{0.1cm}\noindent \bf Proof. }}{\hfill$\Box$\par\medskip}
\newtheorem{remark}[theorem]{Remark}
\numberwithin{equation}{section}
\def\e{\varepsilon}
\def\R{\Re\mathfrak{e} \,}%\def\R{\Re\mbox{{\rm e}} \,}
\def\RR{\mathbb{R}}
\def\CC{\mathbb{C}}
\def\QQ{\mathbb{Q}}
\def\Res{\mbox{\rm Res}}
\def\j#1{\textcolor[named]{Plum}{\bf {#1}}} 
\begin{document}

\title{Characterization of balls by generalized Riesz energy}

\author{Jun O'Hara\footnote{Supported by JSPS KAKENHI Grant Number 16K05136.}}
%
%\date{}
%
\maketitle

\begin{abstract}
We show that balls, circles and $2$-spheres can be identified by generalized Riesz energy among compact submanifolds of the Euclidean space that are either closed or with codimension $0$% without assumption of convexity
, where the Riesz energy is defined as the double integral of some power of the distance between pairs of points. %We also give identification of circles and $2$-spheres. 
As a consequence, we obtain the identification by the interpoint distance distribution. 
\end{abstract}

\medskip{\small {\it Keywords:} integral geometry, convex geometry, Riesz energy}

{\small 2010 {\it Mathematics Subject Classification:} 53C65, 60D05, 58A99.}%52A38, 

%%%%%%%%%%%%%%%%%%%%%%%%%%%%%%%%%%%%%%%%%%%%%%%%%%%%%%%%%%%%%%%%%%%%%%%%%%%%%%%%%%%%%%
%!!!!!!!!!!!!!!!!!!!!!!!!!!!!!!!!!!!!!!!!!!!!!!!!!!!!!!!!!!!!!!!!!!!!!
\section{Introduction}
%!!!!!!!!!!!!!!!!!!!!!!!!!!!!!!!!!!!!!!!!!!!!!!!!!!!!!!!!!!!!!!!!!!!!!
%
Suppose $X$ is a compact submanifold of $\RR^d$ which is either a {\em compact body} $\Omega$, i.e. the closure of a bounded open set of $\RR^d$, or a closed submanifold $M$. % (including a PL manifold such as a polygon and a polytope). 
Let us consider the integral 
\begin{equation}\label{I_q}
I_q(X)=\int_{X\times X}|x-y|^q\,dxdy,
\end{equation}
where $dx$ and $dy$ are the Lebesgue measures of $X$. 
%The right hand side of \eqref{I_q} 
It is well-defined if $q>-\dim X$. % or $\R q>-\dim X$ when $q\in\CC$. 
It is called the {\em Riesz $q$-energy} of $X$ when $X$ is a compact body and $-d<q<0$. 

Fix a submanifold $X$ and consider the power $q$ in the integral as a complex number, denoted by $z$ in what follows. 
Then \eqref{I_q} is well-defined on a domain $\{z\in\CC\,:\,\R z>-\dim X\}$, where the map $z\mapsto I_z(X)$ is holomorphic. 
Extend the domain of \eqref{I_q} by analytic continuation to a region of $\CC$, which depends on the regularity of $X$ (it is the whole complex plane $\CC$ if $X$ is smooth). 
%By applying analytic continuation, we can extend the domain of \eqref{I_q} to the whole complex plane $\CC$. 
Then we obtain a meromorphic function with only simple poles at some negative integers. 
We denote it by $B_X(z)$ and call it {\em Brylinski's beta function} of $X$, as it can be expressed by the beta function when $X$ is a circle, sphere or a ball. 
It was introduced by Brylinski \cite{B} for knots, studied by Fuller and Vemuri \cite{FV} for closed (hyper-)surfaces, and by Solanes and the author \cite{OS2} for compact bodies. 

The beta function provides geometric quantities of $X$. For example, the volumes of $X$ and of the boundary $\partial X$ if exists, the total squared curvature of closed curves or the Willmore functional of closed surfaces as residues, and some kind energies as values at special $z$'s. 
With these quantities, we are inclined to ask a question to what extent a space $X$ can be identified by the beta function $B_X(z)$. 
We begin with introducing some preceding results on the identification by closely related geometric quantities. 

Let $f_X(r)$ be the {\em interpoint distance distribution} of $X$;
\[f_X(r)=\mbox{Vol}\left(\{(x,y)\in X\times X\,:\,|x-y|\le r\}\right).\]
%
%where $\mu$ is the Lebesgue measure on $X\times X$. 
%
It is equivalent to the integral \eqref{I_q} in the sense that the Mellin transform of $f_X^{\,\prime}$ is equal to $I_{q-1}(X)$; 
\begin{equation}\label{Mf}
(\mathcal{M}f_X^{\,\prime})(q)=\int_0^\infty r^{q-1}f_X^{\,\prime}(r)\,dr=I_{q-1}(X), 
\nonumber
\end{equation}
%which is equal to $I_q(X)$. %$\dim X=d$ and 
and hence 
\[
f_X^{\,\prime}(r)=\left(\mathcal{M}^{-1}I_{\bullet-1}(X)\right)(r)
=\frac1{2\pi i}\int_{c-i\infty}^{c+i\infty}r^{-z}I_{z-1}(X)\,dz \hspace{0.5cm}(c>1-\dim X). 
\]

\smallskip
The {\em chord length distribution} of a {\sl convex} body $K$ is given by 
\[ 
g_K(r)=\mu\{\ell\in\mathcal{E}_1\,:\,L(K\cap\ell)\le r\}, 
\]
where $\mathcal{E}_1$ is the set of lines in $\RR^d$, $\mu$ is a measure on $\mathcal{E}_1$ that is invariant under motions of $\RR^d$, and $L$ means the length. 
It is equivalent to the interpoint distance distribution for convex bodies in the sense that $g_K$ uniquely determines and is uniquely determined by $f_K$ (for example, \cite{M} p.25), which is a consequence of the Blaschke-Petkantschin formula (for example, \cite{San2} (4.2) p.46). 

\medskip
Let us first consider the identification problem of $X$ by the interpoint distance distribution; whether $f_X(r)=f_{X'}(r)$ for any $r$ implies $X=X'$ up to motions of $\RR^d$. 
The picture is quite different according to whether we assume the convexity of $X$ or not, although the answer is negative in both cases. 

In fact, for convex bodies, Mallows and Clark \cite{MC} gave a pair of non-congruent convex planar polygons with the same chord length distribution as illustrated in Figure \ref{MC}, % as a counter-example to the Blaschke conjecture that asks if the chord length distribution can identify a convex body, 
\begin{figure}[htbp]
\begin{center}
\includegraphics[width=.5\linewidth]{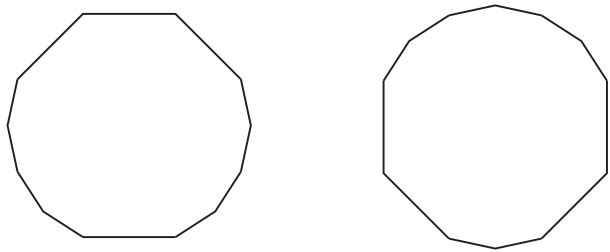}
\caption{Mallows and Clark's counter-example}
\label{MC}
\end{center}
\end{figure}
whereas Waksman \cite{W} pointed out that it is exceptional by showing that a ``{\em generic}'' planar convex polygon can be identified by the chord length distribution. %\footnote{We do not give the definition of a polygon to be {\sl generic} here; we just point out that a regular polygon is not generic.}. 

On the other hand, for general case, Caelli \cite{C} gave a method to produce pairs of non-congruent subsets of $\RR^2$, which are not convex in general, with the same interpoint distance distribution by using two axes of symmetry, as is illustrated in Figure \ref{caelli}. 
\begin{figure}[htbp]
\begin{center}
\includegraphics[width=.35\linewidth]{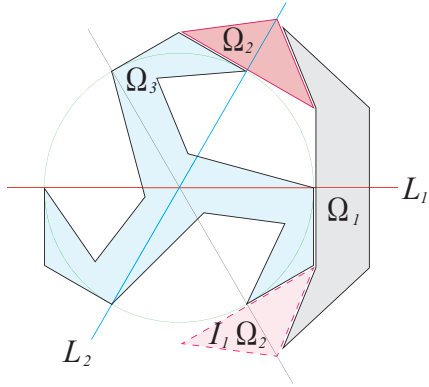}
\caption{Let $I_1$ and $I_2$ be reflections in lines $L_1$ and $L_2$ respectively, which form the angle $q\pi$ $(q\in\QQ)$. Then $R=I_1I_2$ is the rotation by angle $2q\pi$. Let $\Omega_1, \Omega_2$ and $\Omega_3$ be mutually disjoint regions satisfying $I_1\Omega_1=\Omega_1$, $I_2\Omega_2=\Omega_2$, $R\,\Omega_3=\Omega_3$ and $I_1\Omega_3\ne\Omega_3$. Then $X=\Omega_1\cup\Omega_3\cup\Omega_2$ and $X'=\Omega_1\cup\Omega_3\cup R\,\Omega_2$ are not congruent, although they have the same interpoint distance distribution since $R\,\Omega_2=I_1\Omega_2$ implies $X'=\Omega_1\cup R\,(\Omega_3\cup \Omega_2) = \Omega_3\cup I_1(\Omega_1 \cup \Omega_2)$. This is a picture after Caelli's paper. }
\label{caelli}
\end{center}
\end{figure}

\medskip
Let us next consider a weaker problem, whether balls and spheres can be identified by the interpoint distance distribution. 
Again, the picutre is different according to whether we assume convexity or not. 

Among {\sl convex} bodies $K$, balls can be identified by the interpoint distance distribution. It follows directly from the fact that only balls give the maximum of the Riesz energy $I_q(K)$ for $-d<q<0$ among all convex bodies $K$ with a given volume $V=\sqrt{I_0(K)}$. This fact was proved by \cite{D}, \cite{San} and \cite{Sch} independently. 
There is another proof. 
The volumes of a convex body $K$ and the boundary $\partial K$ can be expressed by the chord length distribution by 
\[
\int_{\mathcal{E}_1}L(K\cap \ell)\,d\mu(\ell)=\int_0^\infty r g_K'(r)\,dr \quad\mbox{ and } \quad 
\int_{\mathcal{E}_1}\chi(K\cap \ell)\,d\mu(\ell)=\int_0^\infty g_K'(r)\,dr 
\]
up to multiplication by constants, where $\chi$ is the Euler number. It is a consequence of Crofton's intersection formula (see, for example \cite{San2} 14.3 or \cite{F} 3.2.26). 
%, $\sigma(L)$ is the length of $L\cap K$, $\mathcal{E}_1$ is the set of lines in $\RR^d$ and $dL$ is a suitably normalized measure on $\mathcal{E}_1$ that is invariant under motions of $\RR^d$ (see, for example \cite{G} p.299 or \cite{F}, 3.2.26). 
Then the isoperimetric inequality in general dimension (\cite{F}) implies the conclusion. 

\medskip
In this paper, we drop the assumption of convexity, and instead, we assume regularity of class $C^3$, namely, we restrict ouselves to the set of compact submanifolds $X$ of $\RR^d$ of class $C^3$ with $\dim X=d$ or $\partial X=\emptyset$ ($\dim X<d$), 
and show that balls and circles can be identified by the beta function, and hence, by the interpoint distance distribution. % without the assumption of convexity. 
We also show the identification of $2$-spheres under additional assumptions that the codimension of $X$ is not greater than $1$ and that the regularity is of class $C^4$. % $d-\dim X\le1$. 

%\smallskip
%
%The beta function provides geometric quantities of $X$, such as volumes, Willmore functional for surfaces as residues and some kind energies as values at special $z$'s. 
%The residues of the beta function are geometric quantities of $X$, such as the volume of $X$ (and $\partial X$ for compact bodies), the total squared curvature ($\dim X=1$) or the Willmore functional ($\dim X=2, d=3$), etc. 
%Using these quantities, we can show the identification of balls, circles, and $2$-spheres in $\RR^3$. 

%%%%%%%%%%%%%%%%%%%%%%%%%%%%%%%%%%%%%%%%%%%%%%%%%%%
%%%%%%%%%%% comment out starts here %%%%%%%%%%%%%%%
%%%%%%%%%%%%%%%%%%%%%%%%%%%%%%%%%%%%%%%%%%%%%%%%%%%
\if0 
As for a similar identification problem by the covariogram, the reader is referred to \cite{AB}, \cite{BCP}, and \cite{CB}. 
\fi 
%%%%%%%%%%%%%%%%%%%%%%%%%%%%%%%%%%%%%%%%%%%%%%%%%%%
%%%%%%%%%%% comment out ends here %%%%%%%%%%%%%%%%%
%%%%%%%%%%%%%%%%%%%%%%%%%%%%%%%%%%%%%%%%%%%%%%%%%%%
%
%!!!!!!!!!!!!!!!!!!!!!!!!!!!!!!!!!!!!!!!!!!!!!!!!!!!!!!!!!!!!!!!!!!!!!
\section{Prelimanaries}
%!!!!!!!!!!!!!!!!!!!!!!!!!!!!!!!!!!!!!!!!!!!!!!!!!!!!!!!!!!!!!!!!!!!!!
We first show that the argument in \cite{OS2} goes almost parallel even if we weaken the assumption of regularity of $X$, and introduce some preceding results on the residues of the beta function from \cite{B,FV,OS2}. 

\smallskip
Let $M$ be an $m$ dimensional closed submanifold of $\RR^d$ ($d>m$) and $x\in M$. Put 
\[
\psi_{M,x}(t)=\mbox{Vol}\left(M\cap B_x^d(t)\right) \,\mbox{ and }\> 
\varphi_{M,x}(t)=\psi_{M,x}'(t),
\]
where $B_x^d(t)$ is a $d$-ball with center $x$ and radius $t$. 
Then, 
\begin{equation}\label{beta_closed}
I_z(M)=\int_{M\times M}|x-y|^z\,dxdy=\int_0^\infty t^z\left(\int_M\varphi_{M,x}(t)\,dx\right)dt
\end{equation}
for $\R z>-m$ (\cite{OS2} Proposition 3.3). 

Let $\Omega$ be a compact body in $\RR^d$ and $x\in M$. Put 
\[
\psi_{\nu,x}(t)=\int_{\partial\Omega\cap B_x^d(t)}\langle n_x,n_y\rangle\, dy
\>\mbox{ and }\> 
\varphi_{\nu,x}(t)=\psi_{\nu,x}'(t),
\]
where $n_x$ and $n_y$ are outer unit normal vectors to $\Omega$ at $x$ and $y$. 
Then, 
\begin{eqnarray}
\displaystyle I_z(\Omega)=\int_{\Omega\times\Omega}|x-y|^z\,dxdy
&=&\displaystyle \frac{-1}{(z+2)(z+d)}\int_{\partial\Omega\times \partial\Omega}|x-y|^{z+2}\langle n_x,n_y\rangle\,dxdy \label{beta_body_Stokes} \\[2mm]
&=&\displaystyle \frac{-1}{(z+2)(z+d)}\int_0^\infty t^{z+2}\left(\int_{\partial\Omega}\varphi_{\nu,x}(t)\,dx\right)dt \label{beta_body}
\end{eqnarray}
for $\R z>-d$ and $z\neq -2$ (\cite{OS2} Lemma 4.1). 

\begin{proposition}\label{regularity} \begin{enumerate}
\item 
If $M$ is an $m$ dimensional closed submanifild of class $C^{k+1}$ $(k\ge1)$, then 
\begin{equation}\label{varphi_bar_closed}
\varphi_{M,x}(t)=t^{m-1} \, \overline{\varphi}_{M,x}(t) \nonumber
\end{equation}
for some $\overline{\varphi}_{M,x}$ of class $C^{k}$. 
Moreover, $\overline{\varphi}_{M,x}(t)$ satisfies 
\begin{equation}%\label{overline_varphi_coeff}
\overline{\varphi}_{M,x}(0)=\sigma_{m-1}, \hspace{0.5cm}
\frac{\partial^{2i-1}\, \overline{\varphi}_{M,x}}{\partial t^{2i-1}}(0)=0 \>\>\>(1\le 2i-1 \le k). \nonumber
\end{equation}
%
%can be expanded in a series of $t$ with the coefficient of the leading term being equal to $\sigma_{m-1}$ and the coefficients of the odd power terms being equal to $0$. 

\item 
If $\Omega$ is a compact body of class $C^{k+1}$ $(k\ge1)$, then 
$
\varphi_{\nu,x}(t)=t^{d-2}\,\overline\varphi_{\nu,x}(t)
$
for some $\overline\varphi_{\nu,x}$ of class $C^{k}$, which satisfies % $\overline\varphi_{\nu,x}$ satisfies 
$\overline\varphi_{\nu,x}(0)=\sigma_{d-2}$ and $\overline\varphi_{\nu,x}^{(2i-1)}(0)=0$ $(1\le 2i-1 \le k)$.
%
%the same statements as above with the dimension $m$ being replaced by $d-1$ hold for $\varphi_{\nu,x}(t)$. 
\end{enumerate}
\end{proposition}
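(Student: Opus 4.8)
The plan is to reduce everything to a local analysis near the base point $x$, where the only genuine difficulty is a careful bookkeeping of regularity, following \cite{OS2} but tracking finite differentiability. After a rigid motion assume $x=0$ and, for part (1), that $T_xM=\RR^m\times\{0\}$. Since $M$ is of class $C^{k+1}$, near $x$ it is the graph $\{(u,h(u)):u\in U\subset\RR^m\}$ of a map $h\in C^{k+1}(U,\RR^{d-m})$ with $h(0)=0$ and $Dh(0)=0$. Writing $\rho(u)=\sqrt{|u|^2+|h(u)|^2}$ for the distance from $x$ and $J(u)=\sqrt{\det(I+Dh(u)^TDh(u))}\in C^{k}$ for the volume density (so $J(0)=1$), one has, for small $t$,
\[
\psi_{M,x}(t)=\int_{\{\rho(u)\le t\}}J(u)\,du .
\]
First I would rescale by $u=tw$ to pull the $t$-dependence simultaneously into the integrand and the domain, obtaining $\psi_{M,x}(t)=t^{m}G(t)$ with
\[
G(t)=\int_{E(t)}J(tw)\,dw,\qquad E(t)=\{w:\,|w|^2+\kappa(t,w)\le1\},\quad \kappa(t,w)=\frac{|h(tw)|^2}{t^2}.
\]
Then $\varphi_{M,x}=\psi_{M,x}'=t^{m-1}(mG+tG')$, so it suffices to prove that $G$ extends to an \emph{even} function of class $C^{k}$ near $0$ with $G(0)=\mathrm{Vol}(B^m)$; setting $\overline\varphi_{M,x}=mG+tG'$ then yields the factorization, the value $\overline\varphi_{M,x}(0)=m\,\mathrm{Vol}(B^m)=\sigma_{m-1}$, and, since evenness of $G$ forces evenness of $G$ and of $tG'$, the vanishing of the odd derivatives.

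The main obstacle is exactly the assertion that $G$ is $C^{k}$, i.e. that only \emph{one} derivative is lost. The naive estimate divides $|h|^2$ by $t^2$ and loses two derivatives, and a change of variables straightening $\rho$ (a Morse-type normal form) is even worse. The remedy is to divide $h$ by $t$ before squaring: by Hadamard's lemma $h(tw)=t\int_0^1(Dh)(\tau tw)[w]\,d\tau$, so $\tilde h(t,w):=h(tw)/t=\int_0^1(Dh)(\tau tw)[w]\,d\tau$ is of class $C^{k}$ (the regularity of $Dh$) and vanishes at $t=0$, whence $\kappa=|\tilde h|^2\in C^{k}$ with $\kappa(0,\cdot)=0$. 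With this in hand I would pass to polar coordinates $w=s\omega$ in the $w$-integral: the boundary radius $\varrho(t,\omega)$ solving $\varrho^2+\kappa(t,\varrho\omega)=1$ is $C^{k}$ by the implicit function theorem, the $\varrho$-derivative of the constraint being $2\varrho+O(t)$, which is nonzero at $(\varrho,t)=(1,0)$ because $\kappa(0,\cdot)=0$. Therefore
\[
G(t)=\int_{S^{m-1}}\!\!\int_0^{\varrho(t,\omega)}J(ts\omega)\,s^{m-1}\,ds\,d\omega
\]
is $C^{k}$ in $t$ by differentiation under the integral sign, and evaluating at $t=0$ gives $\varrho(0,\omega)=1$, $J(0)=1$, so $G(0)=\mathrm{Vol}(B^m)$.

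For the evenness I would exploit the symmetry under $(t,w)\mapsto(-t,-w)$: since $(-t)(-w)=tw$ and $(-t)^2=t^2$ we get $\kappa(-t,-w)=\kappa(t,w)$ and $J\big((-t)(-w)\big)=J(tw)$, so the substitution $w\mapsto-w$ in $G(-t)=\int_{E(-t)}J(-tw)\,dw$ carries $E(-t)$ onto $E(t)$ and the integrand onto $J(tw)$, giving $G(-t)=G(t)$. Hence $G$ is genuinely even and its odd derivatives at $0$ vanish up to order $k$, which is the remaining assertion of part (1).

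Finally, part (2) I would deduce from the same computation with $m=d-1$ and $J\equiv1$. Representing $\partial\Omega$ near $x$ as the graph of a scalar $h\in C^{k+1}$ over the tangent hyperplane with $n_x=(0,\dots,0,1)$, the outer normal at $y=(u,h(u))$ is $n_y=(-\nabla h(u),1)/\sqrt{1+|\nabla h(u)|^2}$, so $\langle n_x,n_y\rangle=1/\sqrt{1+|\nabla h(u)|^2}$ exactly cancels the area element $dy=\sqrt{1+|\nabla h(u)|^2}\,du$. Thus $\langle n_x,n_y\rangle\,dy=du$ and $\psi_{\nu,x}(t)=\int_{\{\rho(u)\le t\}}du$, which is the integral treated in part (1) with $m=d-1$ and unit density. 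The conclusion $\overline\varphi_{\nu,x}(0)=\sigma_{d-2}$ together with the vanishing of the odd derivatives then follows verbatim.
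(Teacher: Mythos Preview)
Your route via the rescaling $u=tw$ and the function $G(t)=\psi_{M,x}(t)/t^m$ is genuinely different from the paper's. The paper never introduces $G$; instead it writes $\overline\varphi_{M,x}$ \emph{directly} as $\int_{S^{m-1}}(\partial s/\partial t)(x,v,t)\,dv$, where $s$ is arc-length along the radial curve $\gamma_{x,v}=M\cap(\mathrm{Span}\langle v\rangle\oplus(T_xM)^\perp)$ and $t$ is signed distance from $x$. The point of that choice is that $t\mapsto s$ is $C^{k+1}$ (because $dt/dr$, computed from the implicit relation $t^2=r^2+|h(rv)|^2$, is $C^k$ and nonvanishing), so $\partial s/\partial t\in C^k$ with no further work. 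Your Hadamard device for $\kappa=|h(tw)/t|^2$ and the evenness argument via $(t,w)\mapsto(-t,-w)$ are clean, and the reduction of part~(2) to part~(1) through the cancellation $\langle n_x,n_y\rangle\,dy=du$ is a nice observation that the paper leaves implicit.

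There is, however, a genuine gap in the regularity count. From $G\in C^k$ you get only $G'\in C^{k-1}$, and multiplication by $t$ does \emph{not} in general recover the lost derivative: there exist even $C^k$ functions with $tG'\notin C^k$. For instance, with $k=2$, take $G$ even with $G''(t)=t\sin(1/t)$; then $(tG')''=2G''+tG'''=3t\sin(1/t)-\cos(1/t)$ has no limit as $t\to0$. Hence your assertion that $\overline\varphi_{M,x}=mG+tG'$ is $C^k$ does not follow from $G\in C^k$ alone, and the vanishing of the odd derivatives up to order $k$ is not yet secured.

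The fix stays within your framework: differentiate $\psi$ directly in the polar form
\[
\psi_{M,x}(t)=\int_{S^{m-1}}\!\!\int_0^{R(t,\omega)}J(r\omega)\,r^{m-1}\,dr\,d\omega,\qquad R=t\varrho,
\]
to obtain $\overline\varphi_{M,x}(t)=\int_{S^{m-1}}(\partial_tR)\,J(t\varrho\omega)\,\varrho^{m-1}\,d\omega$, and then read off $\partial_tR$ from the implicit equation $R^2+|h(R\omega)|^2=t^2$ rather than from $R=t\varrho$. One finds
\[
\partial_tR=\frac{t}{R+\langle h(R\omega),Dh(R\omega)[\omega]\rangle}
=\bigl(\varrho+\langle\tilde h(t,\varrho\omega),\,Dh(t\varrho\omega)[\omega]\rangle\bigr)^{-1},
\]
which is manifestly $C^k$ since $\varrho,\tilde h,Dh\in C^k$ and the denominator equals $1$ at $t=0$. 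This is exactly the mechanism behind the paper's claim that $s(x,v,t)$ is $C^{k+1}$, and it closes the gap without changing the rest of your argument (evenness and the value at $0$ carry over verbatim to this integrand).
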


It is a $C^k$ analogue of Proposition 3.1 and Corollary 3.2 of \cite{OS2}. 

\begin{proof}
(1) Using the decomposition 
\[
\RR^d\cong T_x\RR^d=T_xM\oplus(T_xM)^\perp\cong\RR^m\oplus\RR^{d-m}
\]
we can express a neighbourhood $N_x(M)$ of $x$ of $M$ as a graph of a function from $\RR^d$ to $\RR^{d-m}$. 
Let $S^{m-1}$ be the unit sphere in $T_xM\cong\RR^m$. 
For a unit vector $v\in S^{m-1}$, let $s$ be the arc-length parameter of a curve $\gamma_{x,v}=N_x(M)\cap\left(\mbox{Span}\langle v\rangle\oplus\RR^{d-m}\right)$ with $s=0$ at point $x$ and $\gamma_{x,v}'(0)=v$. 
Let $t$ be another parameter of the curve given by the distance from the point $x$ endowed with the same signature as $s$. 
Then $s=s(x,v,t)$ is a function of $t$ of class $C^{k+1}$ defined on an open interval containing $0$. Then for small $t_0>0$, 
\[
\psi_{M,x}(t_0)= \int_0^{t_0}\left(\int_{S^{m-1}}\frac{\partial s}{\partial t}(x,v,t)\,t^{m-1}\,dv\right)dt, \]
and therefore, 
\[
\varphi_{M,x}(t_0)= t_0^{m-1}\int_{S^{m-1}}\frac{\partial s}{\partial t}(x,v,t_0)\,dv,
\]
which implies the first statement. 

Since 
$({\partial t}/{\partial s})(0)=1$ we have $({\partial s}/{\partial t})(x,v,0)=1$, 
which implies $\overline{\varphi}_{M,x}(0)=\sigma_{m-1}$. 

Since $s(x,v,t)=s(x,-v,-t)$ we have 
\[
\begin{array}{rcl}
\overline\varphi_{M,x}(t_0)&=&\displaystyle  \int_{S^{m-1}}\frac12\left(\frac{\partial s}{\partial t}(x,v,t_0)+\frac{\partial s}{\partial t}(x,-v,t_0)\right)dv \\[4mm]
&=&\displaystyle  \int_{S^{m-1}}\frac12\left(\frac{\partial s}{\partial t}(x,v,t_0)+\frac{\partial s}{\partial t}(x,v,-t_0)\right)dv,
\end{array}
\]
which implies $\overline{\varphi}_{M,x}^{(2i-1)}(0)=0$ $(1\le 2i-1 \le k)$. 

\smallskip
(2) The same statements for $\overline\varphi_{\nu,x}(t)$ can be proved in the same way. 
\end{proof}

Since the formulae \eqref{beta_closed} and \eqref{beta_body} imply 
\[\begin{array}{l}
\displaystyle I_z(M)%B_M(z)
=\int_0^\infty t^{z+m-1}\left(\int_M \overline{\varphi}_{M,x}(t)\, dx\right)dt, \\[4mm]
\displaystyle I_z(\Omega)%B_\Omega(z)
=\frac{-1}{(z+2)(z+d)}\int_0^\infty t^{z+d}\left(\int_{\partial\Omega}\overline{\varphi}_{\nu,x}(t)\,dx\right)dt,
\end{array}
\]
the regularization of $I_z(M)$ and $I_z(\Omega)$ can be reduced to that of an integral of the form $I_{w,\phi}=\int_0^\infty t^w\phi(t)\,dt$. 
If $\phi(t)$ is of class $C^k$ then the integrand of the first term of the right hand side of 
\begin{equation}\label{GS}
\begin{array}{rcl}
\displaystyle I_{w,\phi}=\int_0^\infty t^w\,\phi(t)\,dt 
&=& \displaystyle \int_0^1t^w\left[\phi(t)-\phi(0)-\phi'(0)t-\dots -\frac{\phi^{(k-1)}(0)}{(k-1)!}\,t^{k-1}\right]\,dt \\[5mm]
&&\displaystyle +\int_1^\infty t^w\,\phi(t)\,dt 
+\sum_{1\le j\le k}\frac{\phi^{(j-1)}(0)}{(j-1)!\,(z+j)} 
\end{array}
\nonumber
\end{equation}
(\cite{GS} Ch.1, 3.2) can be estimated by $t^{w+k}$, and hence the integral converges for $\R w>-k-1$. 
Therefore $I_{w,\phi}=\int_0^\infty t^w\phi(t)\,dt$ is meromorphic on $\R w>-k-1$ having possible simple poles at $z=-1,\dots, -k$ with the residue at $z=-j$ given by $\phi^{(j-1)}/(j-1)!$ for $j=1,\dots,k$. 
Since $\overline{\varphi}_{M,x}$ and $\overline{\varphi}_{\nu,x}$ are of class $C^{k}$ and $\overline\varphi_{M,x}^{(2i-1)}(0)=\overline\varphi_{\nu,x}^{(2i-1)}(0)=0$, by putting $w=z+m-1$ %and $j=2i+1$ 
for $M$ or $w=z+d$ %and $j=2i+1$ 
for $\Omega$, we obtain the following. %gives the residues of $B_M(z)$ and $B_\Omega(z)$. 

\begin{corollary}\label{cor_regularity}
\begin{enumerate}
\item Suppose $M$ is an $m$ dimensional closed submanifold of $\RR^d$ of class $C^{k+1}$ $(k\ge1)$. The beta function $B_M(z)$ is meromorphic on $\R z> -m-k$ which has possible simple poles at $z=-m-2i$, where $0\le 2i\le k-1$, with
\[
\mbox{\rm Res}(B_M, -m-2i)=\frac1{(2i)!}\int_M \overline{\varphi}_{M,x}^{(2i)}(0) \, dx \hspace{0.5cm} (0\le 2i\le k-1). 
\]
In particular, 
\begin{equation}\label{residue_beta_closed_-m}
\mbox{\rm Res}(B_M, -m)=\sigma_{m-1}\mbox{\rm Vol}\,(M),
\end{equation}
where $\sigma_{j}$ is the volume of the unit $j$-sphere. 
%
% The first $k$ residues (including $0$'s) at $z=-m, \dots , -m+1-k$ can be given by the integral of $\overline{\varphi}_{M,x}^{(j)}(0)$ over $M$ $(0\le j\le k-1)$. 
%
\item Suppose $\Omega$ is a compact body in $\RR^d$ of class $C^{k+1}$ $(k\ge1)$. The beta function $B_\Omega(z)$ is meromorphic on $\R z>-d-k-1$ which has possible simple poles at $z=-d$ and $z=-d-(2i+1)$, where $1\le 2i+1\le k$, with 
\[
\mbox{\rm Res}(B_\Omega, -d-(2i+1))=\frac{-1}{(d+2i-1)(2i+1)!}\int_{\partial\Omega} \overline{\varphi}_{\nu,x}^{(2i)}(0) \, dx \hspace{0.5cm} (1\le 2i+1\le k). 
\]
In particular, 
\begin{equation}\label{residue_beta_body_-d-1}
\mbox{\rm Res}\,(B_\Omega,-d-1)=-\frac{\sigma_{d-2}}{d-1}\mbox{\rm Vol}\,(\partial \Omega). 
\end{equation}
%
% The $k$ successive residues (including $0$'s) starting from the second at $z=-d-1, \dots , -d-k$ can be given by the integral of $\overline{\varphi}_{\nu,x}^{(j)}(0)$ over $\partial\Omega$ $(0\le j\le k-1)$. 
\end{enumerate}
\end{corollary}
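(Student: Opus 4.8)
The plan is to treat both statements as direct consequences of the regularization formula for $I_{w,\phi}=\int_0^\infty t^w\phi(t)\,dt$ quoted above, applied to the two integral representations of $I_z(M)$ and $I_z(\Omega)$ displayed just before the corollary. Writing $\phi_M(t)=\int_M\overline{\varphi}_{M,x}(t)\,dx$ and $\phi_\nu(t)=\int_{\partial\Omega}\overline{\varphi}_{\nu,x}(t)\,dx$, I would first check that these two functions are of class $C^k$ and that differentiation may be carried out under the integral sign, so that $\phi_M^{(\ell)}(0)=\int_M\overline{\varphi}_{M,x}^{(\ell)}(0)\,dx$ and likewise for $\phi_\nu$; this uses the compactness of $M$ and $\partial\Omega$ together with the $C^k$-regularity and the uniform control of the $t$-derivatives supplied by Proposition \ref{regularity}. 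The crucial structural input is the vanishing of the odd Taylor coefficients, $\overline{\varphi}_{M,x}^{(2i-1)}(0)=\overline{\varphi}_{\nu,x}^{(2i-1)}(0)=0$, which forces $\phi_M^{(2i-1)}(0)=\phi_\nu^{(2i-1)}(0)=0$ and thereby annihilates every second candidate pole.

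For part (1) the argument is then a direct substitution. I would set $w=z+m-1$ in $I_z(M)=\int_0^\infty t^{z+m-1}\phi_M(t)\,dt=I_{w,\phi_M}$. The regularization makes this meromorphic for $\R w>-k-1$, i.e.\ $\R z>-m-k$, with simple poles at $w=-j$ $(1\le j\le k)$ and residue $\phi_M^{(j-1)}(0)/(j-1)!$ there. Translating back, $w=-j$ means $z=-m+1-j$; the vanishing of the odd coefficients kills every $j$ with $j-1$ odd, so only $j=2i+1$ survives, giving poles at $z=-m-2i$ with $0\le 2i\le k-1$ and residue $\frac1{(2i)!}\int_M\overline{\varphi}_{M,x}^{(2i)}(0)\,dx$. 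The case $i=0$, together with $\overline{\varphi}_{M,x}(0)=\sigma_{m-1}$, yields \eqref{residue_beta_closed_-m}.

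For part (2) the same substitution $w=z+d$ turns the integral $J(z)=\int_0^\infty t^{z+d}\phi_\nu(t)\,dt$ into $I_{w,\phi_\nu}$, meromorphic for $\R z>-d-k-1$ with simple poles only at $z=-d-(2i+1)$ $(1\le 2i+1\le k)$, the residue there being $\frac1{(2i)!}\int_{\partial\Omega}\overline{\varphi}_{\nu,x}^{(2i)}(0)\,dx$. The extra work is to track the rational prefactor $-1/[(z+2)(z+d)]$ in $I_z(\Omega)=-J(z)/[(z+2)(z+d)]$. At $z=-d-(2i+1)$ the prefactor is holomorphic, so the residue of $I_z(\Omega)$ is its value $-1/[(2i+1)(d+2i-1)]$ times the residue of $J$, which after simplifying $(2i+1)(2i)!=(2i+1)!$ gives exactly the claimed formula; the case $i=0$ with $\overline{\varphi}_{\nu,x}(0)=\sigma_{d-2}$ produces \eqref{residue_beta_body_-d-1}. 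The factor $1/(z+d)$ contributes the additional simple pole at $z=-d$, where $J$ is holomorphic.

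The main obstacle, and the only place where one must argue rather than substitute, is the apparent pole of the prefactor at $z=-2$: it lies inside the strip $\R z>-d-k-1$ yet must not appear among the poles of $B_\Omega$. I would dispose of it by recalling that the original integral $\int_{\Omega\times\Omega}|x-y|^z\,dxdy$ is holomorphic on $\R z>-d$, so for $d>2$ no pole at $z=-2$ is possible; equivalently, one checks directly that $J(-2)=\int_{\partial\Omega\times\partial\Omega}\langle n_x,n_y\rangle\,dxdy=\big|\int_{\partial\Omega}n_x\,dx\big|^2=0$ by the divergence theorem, so this zero of $J$ cancels the pole of the prefactor. Once this cancellation and the differentiation-under-the-integral step are secured, everything else is the bookkeeping of poles and residues described above.
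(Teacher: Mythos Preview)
Your proposal is correct and follows essentially the same approach as the paper: reduce to the Gel'fand--Shilov regularization of $\int_0^\infty t^w\phi(t)\,dt$ via the substitutions $w=z+m-1$ and $w=z+d$, then use the vanishing of the odd Taylor coefficients from Proposition~\ref{regularity} to kill every second pole. You supply more detail than the paper does---in particular the differentiation-under-the-integral justification and the explicit cancellation of the apparent pole at $z=-2$ via $J(-2)=\bigl|\int_{\partial\Omega}n_x\,dx\bigr|^2=0$---but these are refinements of the same argument, not a different route.
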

\begin{remark}\rm 
\begin{itemize}
\item  The equation \eqref{residue_beta_closed_-m} for smooth closed curves was given in \cite{B}. 
Two formulae of residues and the eqation \eqref{residue_beta_body_-d-1} %Proposition 4.6 of \cite{OS2}
for smooth case were given in \cite{OS2}. 
%
%\item It follows that if $M$ (or $\Omega$) is of class $C^{k+1}$, we obtain the first $k$ (or respectively, $k+1$) successive residues (including $0$) starting from $z=-m$ (or respectively, $z=-n$) which gives the first non-zero residue. 
%
\item When $M$ is a closed surface in $\RR^3$, the second residue which appears at $z=-4$ is given by 
\begin{equation}\label{residue_surface_-4}
\mbox{\rm Res}\,(B_M,-4)=\frac\pi 8\int_M(\kappa_1-\kappa_2)^2dx,
\end{equation}
where $\kappa_1$ and $\kappa_2$ are principal curvatures of $M$ (Theorem 4.1 of \cite{FV}; see also Proposition 3.8 of \cite{OS2} for the correction of the coefficient). 
\item The first residue of $B_\Omega(z)$ which appears at $z=-d$ is given by 
\begin{equation}\label{residue_body}
\mbox{\rm Res}\,(B_\Omega,-d)=\sigma_{d-1} \mbox{\rm Vol}\,(\Omega), 
\end{equation}
which can be computed using \eqref{beta_body_Stokes} without using differentiability of $\overline{\varphi}_{\nu,x}(t)$ (\cite{OS2} Lemma 4.5). 
\item The residues of the beta function do not indicate the number of the connected components of $X$ immediately. 
\end{itemize}
\end{remark}

%!!!!!!!!!!!!!!!!!!!!!!!!!!!!!!!!!!!!!!!!!!!!!!!!!!!!!!!!!!!!!!!!!!!!!
\section{Identification of balls and spheres}
%!!!!!!!!!!!!!!!!!!!!!!!!!!!!!!!!!!!!!!!!!!!!!!!!!!!!!!!!!!!!!!!!!!!!!
%
Let $B^n(r), S^1(r)$, and $S^2(r)$ be an $n$-ball, circle, and a $2$-sphere of radius $r$ respectively. 

\begin{lemma}\label{lemma_circle}
If $X$ is a disjoint union of closed curves in $\RR^d$, $B_X(-2)\ge0$ with equality if and only if $X$ is a single circle. 
\end{lemma}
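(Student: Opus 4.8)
The plan is to read off $B_X(-2)$ as (a shift of) the Möbius energy and then use the sharp lower bound for that energy. First note that $z=-2$ is a regular point: by Corollary~\ref{cor_regularity}(1) the poles of $B_X$ for a $1$-dimensional $X$ lie at $z=-1,-3,-5,\dots$, so $B_X(-2)$ is a well-defined real number computed from the analytically continued integral. Writing $X=\bigsqcup_{i=1}^{k}\gamma_i$ for the decomposition into connected closed curves, I would split
\[
I_z(X)=\sum_{i=1}^{k}I_z(\gamma_i)+\sum_{i\neq j}\int_{\gamma_i\times\gamma_j}|x-y|^z\,dx\,dy .
\]
Distinct components are disjoint, so each cross term is holomorphic on all of $\CC$ and at $z=-2$ equals the strictly positive number $\int_{\gamma_i\times\gamma_j}|x-y|^{-2}\,dx\,dy$. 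Hence the matter reduces to the single-curve value $B_\gamma(-2)$, with the cross terms providing strict positivity precisely when $X$ is disconnected.

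Next I would evaluate $B_\gamma(-2)$ for one closed $C^3$ curve $\gamma$ of length $L$, parametrized by arc length on $\RR/L\RR$; set $r(s,u)=|\gamma(s+u)-\gamma(s)|$. The continued integral is $\int_0^L\!\int_{-L/2}^{L/2}r(s,u)^z\,du\,ds$, and after integrating in $s$ the substitution $(s,u)\mapsto(s+u,-u)$ shows it is even in $u$, so only the even Taylor coefficients of $r(s,u)^2/u^2=1-\tfrac1{12}\kappa(s)^2u^2+\cdots$ enter and no logarithm appears. Isolating the singular factor $|u|^z$ and continuing to $z=-2$ then gives
\[
B_\gamma(-2)=\int_0^L\!\int_{-L/2}^{L/2}\Bigl(\frac{1}{|\gamma(s+u)-\gamma(s)|^2}-\frac{1}{u^2}\Bigr)\,du\,ds-4 .
\]
Since $|u|$ is exactly the shorter-arc (intrinsic) distance between $\gamma(s)$ and $\gamma(s+u)$ when $|u|\le L/2$, the double integral is the Möbius energy $E(\gamma)$, so $B_\gamma(-2)=E(\gamma)-4$. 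As a check, for the unit circle $r(s,u)=2|\sin(u/2)|$ and $\int_{-\pi}^{\pi}\bigl(\tfrac14\csc^2(u/2)-u^{-2}\bigr)\,du=2/\pi$, whence $E=2\pi\cdot 2/\pi=4$ and $B_\gamma(-2)=0$.

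The sign and the equality case then follow from the sharp bound $E(\gamma)\ge 4$, with equality if and only if $\gamma$ is a round circle (Freedman--He--Wang; their Möbius-symmetrization argument is dimension independent and so applies in $\RR^d$). Combining with the first step,
\[
B_X(-2)=\sum_{i=1}^{k}\bigl(E(\gamma_i)-4\bigr)+\sum_{i\neq j}\int_{\gamma_i\times\gamma_j}\frac{dx\,dy}{|x-y|^2}\ge 0,
\]
and equality forces both every $E(\gamma_i)=4$, i.e.\ each component a circle, and the vanishing of all cross terms, i.e.\ $k=1$; thus $B_X(-2)=0$ exactly when $X$ is a single circle.

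The genuinely hard input is the last step: the sharp inequality together with the rigidity statement that only circles attain it. This is the Freedman--He--Wang theorem, whose proof (ruling out non-circular minimizers and pinching degenerations) is far beyond a lemma, so I would cite it rather than reprove it. The first two steps are routine by comparison: the decomposition is bookkeeping, and the evaluation of $B_\gamma(-2)$ is a standard Hadamard finite-part computation whose only delicate point --- that the odd-order $u$-terms drop out so that the constant $-4$ is produced cleanly --- is precisely the symmetry recorded by $\overline{\varphi}^{(2i-1)}_{M,x}(0)=0$ in Proposition~\ref{regularity}.
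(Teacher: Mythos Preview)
Your proof is correct and follows essentially the same route as the paper: decompose $B_X(-2)$ into single-curve terms plus strictly positive cross terms, identify $B_\gamma(-2)=E(\gamma)-4$ with the M\"obius energy, and invoke the Freedman--He--Wang sharp bound $E(\gamma)\ge4$ with equality only for circles. The only difference is that you derive the identity $B_\gamma(-2)=E(\gamma)-4$ by an explicit Hadamard finite-part computation, whereas the paper simply cites Brylinski for it (and mentions the Doyle--Schramm cosine formula as the cleanest route to the sharp bound).
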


\begin{proof}
Brylinski showed that $B_C(-2)=E(C)-4$ for a single curve $C$, where $E(C)$ is the so-called {\sl M\"obius energy} defined in \cite{O1} and studied in \cite{FHW}\footnote{In fact, the energy given in \cite{O1} is equal to $(1/2)B_C(-2)$.}. Freedman, He and Wang showed that $E(C)\ge4$ for any single closed curve $C$ in $\RR^3$ with equality if and only if $C$ is a circle. 
The easiest way to see this would be the ``wasted length'' argument and the cosine formula of $E$ by Doyle and Schramm (reported in \cite{AS}). 

Since the definition of the energy and the proofs of the above statements do not use the condition that the dimension of the ambiet space is equal to $3$, the above argument holds regardless of the codimension.  

Suppose $X$ is a disjoint union of $n$ closed curves; $X=C_1 \cup \dots \cup C_n$. We have
\[
B_X(-2)=\sum_{i=1}^nB_{C_i}(-2)+2\sum_{i<j}\int_{C_i}\int_{C_j}|x-y|^{-2}\,dxdy\ge\sum_{i=1}^nB_{C_i}(-2)\ge0,
\]
where the second equality holds if and only if $C_i$ is a circle for any $i$ and the first equality holds if and only if $n=1$. 
\end{proof}

\begin{lemma}\label{sublemma}
Let $X=S^2_1(r_1)\cup S^2_2(r_2)$ be a disjoint union of two $2$-spheres in $\RR^3$ with radii $r_1$ and $r_2$ such that the diameter of $X$ is not greater than $2$. 
Put 
\[
\Delta_{2-\e}^c=\left\{(x,y)\in\RR^3\times\RR^3\,:\,|x-y|>2-\e\right\}.
\]
Then there are positive constants $\e_1$ and $C$ such that if $0<\e<\e_1$ then 
\begin{equation}\label{ratio_area}
\frac{\mbox{\rm Vol}\left((S^2_1(r_1)\times S^2_2(r_2))\cap\Delta_{2-\e}^c\right)}{\mbox{\rm Vol}\left(S^2_1(r_1)\times S^2_2(r_2)\right)}
< C\e^2.  \nonumber
\end{equation}
\end{lemma}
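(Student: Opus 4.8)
The plan is to localize the problem near the unique pair of points that realizes the diameter, and to model the distance function there by a positive definite quadratic form. Write $c_1,c_2$ for the centers of $S^2_1(r_1)$ and $S^2_2(r_2)$, set $a=|c_1-c_2|$, and let $D=a+r_1+r_2$ be the maximal distance between a point of $S^2_1(r_1)$ and a point of $S^2_2(r_2)$, so that $D\le 2$ by the hypothesis on the diameter. If $D<2$, then for $0<\e<2-D$ the set $(S^2_1(r_1)\times S^2_2(r_2))\cap\Delta_{2-\e}^c$ is empty and the inequality holds trivially with $\e_1=2-D$; hence the only substantial case is $D=2$, which I treat next.

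Assume $D=2$. First I would check that $a>0$: if the spheres were concentric ($a=0$), then $D=r_1+r_2=2$ together with $2r_1\le 2$ and $2r_2\le 2$ (forced by the diameter bound) would give $r_1=r_2=1$, so the spheres coincide, contradicting disjointness. With $a>0$, the estimate $|x-y|\le|x-c_1|+|c_1-c_2|+|c_2-y|=D$ shows the maximum is attained only at the pair of outer poles $p_1=c_1-r_1e$, $p_2=c_2+r_2e$ with $e=(c_2-c_1)/a$, and this maximizing pair is unique. I would then introduce local coordinates by parametrizing $x$ near $p_1$ and $y$ near $p_2$ through their small angular deviations, encoded by tangent displacements $\vec u\in T_{p_1}S^2_1\cong\RR^2$ and $\vec w\in T_{p_2}S^2_2\cong\RR^2$. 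A second-order expansion of the distance then yields
\[
|x-y|=2-\frac14\left[\frac{a+r_2}{r_1}|\vec u|^2+\frac{a+r_1}{r_2}|\vec w|^2+2\,\vec u\cdot\vec w\right]+O\bigl((|\vec u|+|\vec w|)^3\bigr).
\]

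The decisive point is that the quadratic form $Q(\vec u,\vec w)$ in brackets is positive definite. Its only potentially negative contribution is the cross term, which by Cauchy--Schwarz is bounded by $2|\vec u|\,|\vec w|$; since $(a+r_1)(a+r_2)=a^2+a(r_1+r_2)+r_1r_2>r_1r_2$ exactly when $a>0$, the diagonal coefficients dominate and $Q$ stays strictly positive off the origin. This nondegeneracy of the maximum (which relies on $a>0$, valid precisely in the case $D=2$) is what I expect to be the main obstacle, and it is the step I would carry out explicitly in the angular coordinates.

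Finally I would convert this into the volume estimate. By uniqueness of the maximizing pair and compactness, $|x-y|$ is bounded away from $2$ outside a fixed small neighborhood of $(p_1,p_2)$, so for small $\e$ the set $(S^2_1(r_1)\times S^2_2(r_2))\cap\Delta_{2-\e}^c$ lies entirely in the coordinate patch, where the area measure is comparable to Lebesgue measure $d\vec u\,d\vec w$. There the condition $|x-y|>2-\e$ forces $Q(\vec u,\vec w)<4\e+O((|\vec u|+|\vec w|)^3)$; positive definiteness of $Q$ makes $|\vec u|+|\vec w|=O(\sqrt{\e})$ on this set, so the cubic remainder is $O(\e^{3/2})=o(\e)$ and the set is contained in the ellipsoid $\{Q<8\e\}$ for $\e$ small. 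Since $Q$ is a positive definite quadratic form in four variables, the $4$-dimensional volume of this ellipsoid scales like $(8\e)^{2}=O(\e^2)$, while the denominator is the fixed constant $\mbox{Vol}(S^2_1(r_1)\times S^2_2(r_2))=16\pi^2r_1^2r_2^2$. Dividing gives the desired bound $C\e^2$, completing the plan.
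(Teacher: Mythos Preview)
Your proof is correct but follows a genuinely different route from the paper. You localize near the unique maximizing pair $(p_1,p_2)$, Taylor expand $|x-y|$ to second order, verify that the resulting quadratic form $Q$ is positive definite precisely because $a>0$, and then read off the $O(\e^2)$ volume of the four-dimensional ellipsoid $\{Q<8\e\}$. The paper instead gives a global, elementary argument: since the numerator only increases with the distance between the centers, one may push both spheres out until they sit inside the unit ball $B^3(1)$ touching opposite sides; then $|x-y|>2-\e$ forces both $|x|>1-\e$ and $|y|>1-\e$, so $(x,y)$ lies in a product of two spherical caps whose areas (by Archimedes' formula $A=2\pi r\cdot h$) are each $O(\e)$, giving the explicit constant $C=\bigl(4r_1r_2(1-r_1)(1-r_2)\bigr)^{-1}$. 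Your Morse-theoretic approach is more conceptual and would generalize immediately to other smooth compact surfaces with a nondegenerate distance maximum, while the paper's approach avoids any expansion, yields an explicit $C$, and handles all admissible positions of the spheres at once by the monotonicity reduction.
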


\begin{proof}
Put $\e_1=\min\{1,r_1,r_2\}$. Since the numerator of the left hand side of \eqref{ratio_area} is an increasing function of the distance between two spheres, we have only to show the inequality when the distance is equal to $1-2r_1-2r_2$. 
Therefore we may assume both $S^2_1(r_1)$ and $S^2_2(r_2)$ are contained in the unit ball with center the origin. 

If $(x,y)\in (S^2_1(r_1)\times S^2_2(r_2))\cap\Delta_{2-\e}^c$ then 
\[
2-\e<|x-y|\le|x|+|y|\le|x|+1, 
\]
which means that $x$ is in the complement of the ball with center the origin and radius $1-\e$, which we denote by $\left(B^3(1-\e)\right)^c$. 
Since 
\[
A\left(S^2_1(r_1)\cap \left(B^3(1-\e)\right)^c\,\right)=2\pi r_1\,\frac{\e(1-\frac\e2)}{1-r_1},
\]
where $A$ means the area. We have 
\[\begin{array}{rcl}
\displaystyle 
\frac{\mbox{Vol}\left((S^2_1(r_1)\times S^2_2(r_2))\cap\Delta_{2-\e}^c\right)}{\mbox{Vol}\left(S^2_1(r_1)\times S^2_2(r_2)\right)}
&\le& \displaystyle \frac{A\left(S^2_1(r_1)\cap \left(B^3(1-\e)\right)^c\,\right)\cdot A\left(S^2_2(r_2)\cap \left(B^3(1-\e)\right)^c\,\right)}{A\left(S^2_1(r_1)\right)\cdot A\left(S^2_2(r_2)\right)}  \\ [4mm]
&=& \displaystyle \frac{\e^2(1-\frac\e2)^2}{4r_1r_2(1-r_1)(1-r_2)},
\end{array}\]
which implies that if we put 
\[
C=\frac{1}{4r_1r_2(1-r_1)(1-r_2)}
\]
then the inequality \eqref{ratio_area} is satisfied. 
\end{proof}

\begin{lemma}\label{lemma_sphere}
Suppose $X$ is a disjoint union of $n$ two dimensional spheres in $\RR^3$ that has the same area and diameter as $S^2(r)$. If $n>1$ then $X$ has a different interpoint distance distribution, and hence a different beta function, from $S^2(r)$. 
\end{lemma}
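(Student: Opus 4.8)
The plan is to compare the two interpoint distance distributions near the top of their common support, that is, near the value $2r$, which is the diameter of both $X$ and $S^2(r)$. Write $X=\bigcup_{i=1}^n S^2_i(r_i)$. The hypothesis that $X$ and $S^2(r)$ have the same area gives $\sum_i r_i^2=r^2$, so that $r_i<r$ for every $i$ once $n>1$; in particular the intrinsic diameter $2r_i$ of each sphere is strictly less than $2r$, and the diameter $2r$ of $X$ can only be realized by a pair of points lying on two \emph{distinct} spheres. Since both spaces have diameter $2r$, all pairs satisfy $|x-y|\le 2r$, whence $f_X(2r)=\mathrm{Vol}(X\times X)=(\mathrm{Area}\,X)^2=(4\pi r^2)^2=f_{S^2(r)}(2r)$. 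It therefore suffices to show that the two functions have different left-hand increments at $2r$, i.e. that $f_{S^2(r)}(2r)-f_{S^2(r)}(2r-\e)$ and $f_X(2r)-f_X(2r-\e)$ differ for all sufficiently small $\e>0$.

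First I would compute the single-sphere increment. Write $\Delta^c_{2r-\e}=\{(x,y)\in\RR^3\times\RR^3:|x-y|>2r-\e\}$. If $\theta$ is the angle subtended at the centre by $x,y\in S^2(r)$, then $|x-y|=2r\sin(\theta/2)$, so the condition $|x-y|>2r-\e$ is equivalent to $y$ lying in a spherical cap about the antipode $-x$ of angular radius $\sim 2\sqrt{\e/r}$, whose area is $\sim 4\pi r\e$. Integrating over $x\in S^2(r)$ gives
\[
f_{S^2(r)}(2r)-f_{S^2(r)}(2r-\e)=\mathrm{Vol}\big((S^2(r)\times S^2(r))\cap\Delta^c_{2r-\e}\big)=16\pi^2 r^3\,\e+o(\e).
\]
The essential point is only that this increment is of exact order $\e$ with positive leading coefficient; geometrically this reflects that the diameter of a single sphere is attained on the two-dimensional antipodal manifold $\{(x,-x)\}$, with quadratic decay in the two transverse directions.

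Next I would estimate the increment for $X$. Decomposing $X\times X=\bigcup_{i,j}S^2_i(r_i)\times S^2_j(r_j)$, the diagonal blocks $S^2_i\times S^2_i$ have maximal chord $2r_i<2r$ and hence contribute nothing once $\e<2(r-\max_i r_i)$. Each off-diagonal block $S^2_i\times S^2_j$ with $i\neq j$ has maximal chord at most $2r$; after rescaling so that the two-sphere subconfiguration $S^2_i\cup S^2_j$ has diameter at most $2$, Lemma \ref{sublemma} bounds its contribution to $\Delta^c_{2r-\e}$ by $C_{ij}\,\e^2\cdot\mathrm{Vol}(S^2_i\times S^2_j)$ with $C_{ij}$ finite, since $0<r_i,r_j<r$ keeps the constants bounded. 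Summing over the finitely many blocks yields
\[
f_X(2r)-f_X(2r-\e)=\mathrm{Vol}\big((X\times X)\cap\Delta^c_{2r-\e}\big)=O(\e^2).
\]

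Finally, subtracting the two increments and using $f_X(2r)=f_{S^2(r)}(2r)$ gives
\[
f_X(2r-\e)-f_{S^2(r)}(2r-\e)=16\pi^2 r^3\,\e+o(\e),
\]
which is strictly positive for all small $\e>0$; hence $f_X\neq f_{S^2(r)}$. Since $B_X$ determines $I_z(X)$ on its domain, which recovers $f_X'$ by the inverse Mellin transform described in the Introduction and hence $f_X$ itself, distinct distance distributions force distinct beta functions, completing the argument. I expect the point needing genuine care to be the order-$\e$ lower bound for the single sphere together with the matching quadratic bound for $X$: the whole argument collapses if $X$ could also attain its diameter on a positive-dimensional set (such as a pair of concentric spheres), which would restore linear-in-$\e$ behaviour. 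This is precisely what the constraints $\sum_i r_i^2=r^2$ and $\mathrm{diam}\,X=2r$ prevent, since they force the diameter of $X$ to be realized only between distinct spheres, where Lemma \ref{sublemma} guarantees the $O(\e^2)$ bound.
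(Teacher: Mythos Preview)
Your argument is correct and follows essentially the same route as the paper: you compare the increments of $f_X$ and $f_{S^2(r)}$ near the common diameter $2r$, use Lemma~\ref{sublemma} on the off-diagonal blocks to obtain an $O(\e^2)$ bound for $X$, and contrast it with the exact linear-in-$\e$ increment for a single sphere. The paper carries out the same comparison with $r=1$ and the explicit formula $\mbox{Vol}\big((S^2\times S^2)\cap\Delta_{2-\e}^c\big)=(\e-\e^2/4)\,\mbox{Vol}(S^2\times S^2)$ in place of your asymptotic $16\pi^2 r^3\e+o(\e)$, but the structure is identical.
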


\begin{proof}
We may assume without loss of generality that $r=1$. 
Assume that $X=S^2_1(r_1)\cup\dots \cup S^2_n(r_n)$ with $n>1$, $r_1\ge \dots \ge r_n$, $r_1^2+\cdots+r_n^2=1$ and that the diameter of $X$ is equal to $2$. %$\mbox{diam}(X)=2$, where $\mbox{diam}$ is the diameter of $X$. 
Put $\e_0=\min\{2-2r_1,r_n,1\}$. 
Then if $0<\e<\e_0$ then $(S^2_i(r_i)\times S^2_i(r_i))\cap\Delta_{2-\e}^c=\emptyset$ for any $i$. Therefore, Lemma \ref{sublemma} implies 
\[
\mbox{Vol}\left((X\times X)\cap\Delta_{2-\e}^c\right)
\le\sum_{i\ne j}\mbox{Vol}\left((S^2_i(r_i)\times S^2_j(r_j))\cap\Delta_{2-\e}^c\right)
\le C_0\,\e^2\sum_{i\ne j}\mbox{Vol}\left(S^2_i(r_i)\times S^2_j(r_j)\right),
\]
where 
\[
C_0=\max_{i\ne j}\frac1{4r_ir_j(1-r_i)(1-r_j)}.
\]
If we take $\e>0$ so that $(C_0+\frac14)\e<1$ then 
\[
\mbox{Vol}\left((X\times X)\cap\Delta_{2-\e}^c\right)\le C_0\,\e^2\,\mbox{Vol}\,(X\times X)
<\left(\e-\frac{\e^2}4\right)\mbox{Vol}\,(S^2\times S^2)
=\mbox{Vol}\left((S^2\times S^2)\cap\Delta_{2-\e}^c\right),
\]
which completes the proof.
\end{proof}

\begin{theorem}\label{main_thm} 
Assume $X$ is a compact submanifold of $\RR^d$ that is either a body $(\dim X=d)$ or a closed submanifold $(\partial X=\emptyset, \, \dim X<d)$. 
Then the following hold up to congruence of $\RR^d$. 
\begin{enumerate}
\item If $X$ is a compact body of class $C^2$ and $B_X(z)=B_{B^d(r)}(z)$ holds for any $z\in\CC$, then $X=B^d(r)$. 

\item If $X$ is of class $C^3$ and $B_X(z)=B_{B^n(r)}(z)$ holds for any $z\in\CC$, then $n=d$ and $X=B^d(r)$. % up to congruence of $\RR^d$. 
\item If $X$ is of class $C^3$ and $B_X(z)=B_{S^1(r)}(z)$ holds for any $z\in\CC$, then $X=S^1(r)$. % up to congruence of $\RR^d$. 
\item If $X$ is of class $C^4$, $d-\dim X\le 1$ and $B_X(z)=B_{S^2(r)}(z)$ for any $z\in\CC$, then $d=3$ and $X=S^2(r)$. 
%Assume further that $X$ is of class $C^3$ and that $d-\dim X\le 1$. Then, if $B_X(z)=B_{S^2(r)}(z)$ for any $z\in\CC$ then $d=3$ and $X=S^2(r)$. % up to congruence of $\RR^3$. 
\end{enumerate}
\end{theorem}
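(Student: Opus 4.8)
The plan is to let the pole locations and residues of $B_X$ from Corollary \ref{cor_regularity} determine the dimension of $X$, whether $X$ is a body or a closed submanifold, and the relevant volumes, and then to invoke a rigidity statement (an isoperimetric inequality or one of the lemmas above) to finish each case. The structural fact I would exploit throughout is that the pole pattern distinguishes the two types of $X$: a compact body of dimension $d$ has poles at the consecutive integers $-d$ and $-d-1$, coming respectively from $\mathrm{Vol}(X)$ and $\mathrm{Vol}(\partial X)$, whereas a closed $m$-submanifold has a pole at $-m$ but none at $-m-1$, its next pole sitting at $-m-2$. Reading off the dimension and type of $X$ from the matching of $B_X$ with the prescribed beta function is the first move in every part.

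For (1), since $X$ is a body we have $\dim X=d$; matching the residues at $z=-d$ and $z=-d-1$ through \eqref{residue_body} and \eqref{residue_beta_body_-d-1} gives $\mathrm{Vol}(X)=\mathrm{Vol}(B^d(r))$ and $\mathrm{Vol}(\partial X)=\mathrm{Vol}(\partial B^d(r))$ (class $C^2$ already guarantees both residues), so $X$ realizes equality in the isoperimetric inequality and must be a round ball, which the volume then pins to $B^d(r)$. For (2), I would first exclude the closed case: if $X$ were a closed $m$-submanifold, matching the leading poles forces $m=n$, but then $-n-1$ lies in the region of meromorphy $\{\R z>-n-2\}$ of the $C^3$ beta function and is a regular point of $B_X$ (whose poles occur only at $-n,-n-2,\dots$), whereas $B_{B^n(r)}$ has a genuine pole there from its positive boundary volume, a contradiction. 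Hence $X$ is a body, $d=\dim X=n$, and case (1) applies.

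For (3) the same dichotomy forces $X$ to be a closed $1$-submanifold, i.e.\ a disjoint union of closed curves: a $1$-dimensional body would exhibit a pole at $-2$, which $B_{S^1(r)}$ lacks. Since $-2$ is then a regular point for $B_X$, the value $B_X(-2)=B_{S^1(r)}(-2)=0$ is defined, and Lemma \ref{lemma_circle} immediately forces $X$ to be a single circle; the residue at $-1$ fixes its length and hence its radius, giving $X=S^1(r)$.

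Part (4) is where the real work lies. As before, the pole structure shows $X$ is a closed $2$-submanifold, and since the codimension assumption gives $\dim X<d\le\dim X+1$ we get $d=3$, so $X$ is a closed surface in $\RR^3$. The $C^4$ hypothesis makes the residue at $-4$ available, and matching it with \eqref{residue_surface_-4} for the umbilic sphere yields $\int_X(\kappa_1-\kappa_2)^2\,dx=0$; thus $X$ is totally umbilic, and each connected component, being a compact totally umbilic surface, is a round sphere. Matching the residue at $-2$ gives equal total area, so $\sum_i r_i^2=r^2$, while equality of the beta functions gives equality of the interpoint distance distributions via the Mellin transform relation of the introduction, and in particular $\mathrm{diam}(X)=2r$. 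Thus $X$ satisfies the hypotheses of Lemma \ref{lemma_sphere}, which rules out more than one component, so $X$ is a single sphere of radius $r$. I expect the genuinely hard step to be this last one: legitimizing the passage from the vanishing $-4$ residue to ``disjoint union of round spheres'', and then supplying the quantitative estimate of Lemmas \ref{sublemma} and \ref{lemma_sphere} showing that a union of spheres with the prescribed area and diameter but more than one component cannot share the distance distribution of $S^2(r)$.
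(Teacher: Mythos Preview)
Your proof is correct and follows essentially the same route as the paper: pole locations pin down the dimension and type of $X$, the residues supply the volumes (and, in (4), the total umbilicity via \eqref{residue_surface_-4}), and then the isoperimetric inequality or Lemmas \ref{lemma_circle}/\ref{lemma_sphere} finish each case. The only cosmetic difference is in part (4), where the paper reads off the diameter directly as $\mathrm{diam}(X)=\lim_{n\to\infty}(B_X(n))^{1/n}$ rather than passing through the Mellin-transform equivalence with the interpoint distance distribution; both routes are valid.
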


\begin{proof}
We first give proof under the assumption that $X$ is smooth. 

\medskip
(1) By the equations \eqref{residue_body} and \eqref{residue_beta_body_-d-1}, the residues at $z=-d$ and $-d-1$ imply that $\mbox{\rm Vol}(X)=\mbox{\rm Vol}(B^d(r))$ and $\mbox{\rm Vol}(\partial X)=\mbox{\rm Vol}(\partial B^d(r))$. Then the isoperimetric inequality in general dimension (\cite{F}) implies that $X$ is an $d$-ball with radius $r$. 

\smallskip
(2) Suppose $B_X(z)=B_{B^n}(z)$ for any $z\in\CC$. 
The information of the poles implies that $X$ is a compact body in $\RR^n$, and hence $n=d$ by the assumption of the theorem. The rest is same as in (1). 

\medskip
(3) Suppose $B_X(z)=B_{S^{1}}(z)$ for any $z\in\CC$. The information of the poles implies that $X$ is a union of closed curves in $\RR^d$. 
By Lemma \ref{lemma_circle}, $X$ is a single circle. 
By the equation \eqref{residue_beta_closed_-m}, the residue at $z=-1$ implies that $L(X)=2\pi r$, and hence $X=S^1(r)$. 

\medskip
(4) Suppose $B_X(z)=B_{S^{2}(r)}(z)$ for any $z\in\CC$. The information of the poles implies that $X$ is a union of $2$-dimensional closed surfaces, and hence, by the additional assumption of the theorem, $d=3$. Since $\mbox{\rm Res}\,(B_X,-4)=0$, the equation \eqref{residue_surface_-4} shows that $X$ is totally umbilic, which implies that each connected component of $X$ is part of either a sphere or a plane (Meusnier 1785). 
Since $X$ is a closed surface, $X$ is a union of spheres. 
By \eqref{residue_beta_closed_-m}, $\Res(B_X,-2)=\Res(B_{S^2(r)},-2)$ implies that the area of $X$ is same as that of $S^2(r)$. 
Since the diameter of $X$ is given by $\lim_{n\to\infty}(B_X(n))^{1/n}$, $X$ has the same diameter as $S^2(r)$. 
Now the conclusion follows from Lemma \ref{lemma_sphere}. 

\medskip
We next show that the regularity of $X$ specified in each statement of the theorem is enough for the proof. 
Corollary \ref{cor_regularity} implies that if $M$ (or $\Omega$) is of class $C^{k+1}$, we obtain the first $k$ (or respectively, $k+1$) successive residues (including $0$) of $B_M(z)$ (or respectively, $B_\Omega(z)$) starting from $z=-m$ (or respectively, $z=-n$) which gives the first non-zero residue. 
Therefore, the regularity of $X$ guarantees the existence of a necessary number of residues for the proof of each statement. 
\end{proof}

\begin{corollary} 
Under the same assumption as in Theorem \ref{main_thm}, balls, circles, and $2$-spheres can be  identified by the interpoint distance distribution. 
%
%Among compact smooth submanifolds $X$ of $\RR^d$ satisfying either $\dim X=d$ or $\partial X=\emptyset$ $(\dim X<d)$, $d$-balls and circles can be identified by the interpoint distance distribution. 
%
%Under additional assumption that $d-\dim X\le 1$, $2$-spheres can also be identified by the interpoint distance distribution. 
\end{corollary}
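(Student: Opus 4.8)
The plan is to reduce the identification by the interpoint distance distribution to the identification by the beta function established in Theorem \ref{main_thm}, using the equivalence between $f_X$ and the energy integral $I_z(X)$ recorded in the introduction. Let $X$ be a compact submanifold as in the theorem, and let $X'$ denote the relevant model space, one of $B^n(r)$, $S^1(r)$, or $S^2(r)$. The assertion to prove is that $f_X=f_{X'}$ (as functions of $r$) forces $X\cong X'$; the reverse implication is immediate, since $f_X$ is manifestly invariant under congruences of $\RR^d$.

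First I would differentiate the hypothesis $f_X=f_{X'}$ to get $f_X^{\,\prime}=f_{X'}^{\,\prime}$ almost everywhere on $(0,\infty)$. Feeding this into the Mellin transform identity $(\mathcal{M}f_X^{\,\prime})(q)=I_{q-1}(X)$ from the introduction, the two sides become literally the same integral $\int_0^\infty r^{q-1}f_X^{\,\prime}(r)\,dr$, so they converge on the common strip $\R z>\max\{-\dim X,\,-\dim X'\}$ and agree there: $I_z(X)=I_z(X')$ for all such $z$. Each of $z\mapsto I_z(X)$ and $z\mapsto I_z(X')$ is holomorphic on its half-plane of convergence and extends by analytic continuation to the meromorphic beta function. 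Since they coincide on a nonempty open set, the identity theorem yields $B_X(z)=B_{X'}(z)$ for every $z\in\CC$.

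With the beta functions equal, Theorem \ref{main_thm} applies verbatim: parts (1)--(2) give $X=B^d(r)$, part (3) gives $X=S^1(r)$, and part (4), under its codimension and $C^4$ hypotheses, gives $X=S^2(r)$, all up to congruence. The only step needing attention is the transition from equality on a strip to equality of the full meromorphic functions, which is precisely where the possibly different dimensions of $X$ and $X'$ enter: their strips of convergence need not coincide, but the overlap is a nonempty open set, and that is all the identity theorem requires. Thus no difficulty beyond what Theorem \ref{main_thm} already handles arises, and the three model families are characterized by the interpoint distance distribution.
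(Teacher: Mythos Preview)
Your argument is correct and matches the paper's intended route: the corollary is stated without proof precisely because the introduction already records the Mellin-transform equivalence $(\mathcal{M}f_X^{\,\prime})(q)=I_{q-1}(X)$, so equality of interpoint distance distributions forces equality of $I_z$ on a half-plane and hence of the full beta functions by analytic continuation, after which Theorem~\ref{main_thm} applies. Your explicit handling of the possibly different strips of convergence via the identity theorem is the only detail beyond what the paper leaves implicit, and it is handled correctly.
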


%%%%%%%%%%%%%%%%%%%%%%%%%%%%%%%%%%%%%%%%%%%%%%%%%%%
%%%%%%%%%%% comment out starts here %%%%%%%%%%%%%%%
%%%%%%%%%%%%%%%%%%%%%%%%%%%%%%%%%%%%%%%%%%%%%%%%%%%
\if0
\smallskip
% 
% Alternative proof of Theorem (2) when $d=3$. 
%
If we assume $X\subset \RR^3$, we have another proof for a closed curve $X$ to be a circle as follows. 
If $X$ is a closed curve in $\RR^3$, there holds 
\[
(2\pi)^2\le\left(\int_X \kappa(x)\,dx\right)^2\le L(X)\int_X \kappa^2(x)\,dx,
\]
where the first inequality is the consequence of Fenchel's theorem (\cite{fenchel}) with equality if and only if $X$ is a planar convex curve, and the second inequality is the consequence of Cauchy-Schwarz inequality with equality if and only if $\kappa(x)$ is a constant on $X$. 
It follows from the residues at $z=-1$ and $-2$ that $L(X)=2\pi r$ and $\int_X\kappa^2(x)dx=2\pi/r$. Therefore, the both equalities above hold, which implies that $X$ is a circle with radius $r$. 
\fi 
%%%%%%%%%%%%%%%%%%%%%%%%%%%%%%%%%%%%%%%%%%%%%%%%%%%
%%%%%%%%%%% comment out ends here %%%%%%%%%%%%%%%%%
%%%%%%%%%%%%%%%%%%%%%%%%%%%%%%%%%%%%%%%%%%%%%%%%%%%

%%%%%%%%%%%%%%%%%%%%%%%%%%%%%%%%%%%%%%%%%%%%%%%%%%%
%%%%%%%%%%% comment out starts here %%%%%%%%%%%%%%%
%%%%%%%%%%%%%%%%%%%%%%%%%%%%%%%%%%%%%%%%%%%%%%%%%%%
\if0 
\j{Identification of regular $n$-gons?}

\j{What happens if we allow $X$ such that $\dim X<d$ and $\partial X\ne\emptyset$?}

\j{Does $S^k$ give the minimum energy $B_X(-2k)$ when $k$ is odd? If so, the beta function can identify odd dimensional spheres. }
\end{problem}
\fi 
%%%%%%%%%%%%%%%%%%%%%%%%%%%%%%%%%%%%%%%%%%%%%%%%%%%
%%%%%%%%%%% comment out ends here %%%%%%%%%%%%%%%%%
%%%%%%%%%%%%%%%%%%%%%%%%%%%%%%%%%%%%%%%%%%%%%%%%%%%\begin{problem} 

Jun O'Hara

Department of Mathematics and Informatics,Faculty of Science, 
Chiba University

1-33 Yayoi-cho, Inage, Chiba, 263-8522, JAPAN.  

E-mail: ohara@math.s.chiba-u.ac.jp

\end{document}